\newtheorem{theorem}{Theorem}[section]
\newtheorem{lemma}[theorem]{Lemma}
\theoremstyle{remark}
\newtheorem{remark}[theorem]{Remark}
\numberwithin{equation}{section}
\begin{document}

\title[On node distributions]{On node distributions for interpolation and spectral methods}

\author{N. S. Hoang}

\address{Mathematics Department, University of Oklahoma,
Norman, OK 73019, USA}

\email{nhoang@math.ou.edu}

\subjclass[2000]{Primary  65D05; Secondary 41A05, 41A10}

\date{}

\keywords{Interpolation, pseudospectral methods, node distributions, differentiation matrices, Chebyshev nodes, Chebyshev-Gauss-Lobatto nodes.}

\begin{abstract}
A scaled Chebyshev node distribution is studied in this paper. It is proved that the node distribution is optimal for interpolation in $C_M^{s+1}[-1,1]$, the set of $(s+1)$-time differentiable functions whose $(s+1)$-th derivatives are bounded by a constant $M>0$. Node distributions for computing spectral differentiation matrices are proposed and studied. 
Numerical experiments show that the proposed node distributions yield results with higher accuracy than the most commonly used Chebyshev-Gauss-Lobatto node distribution. 
\end{abstract}
\maketitle

\section{Introduction}
Choosing nodes is important in interpolating a function and solving differential or integral equations by pseudospectral methods. 
Given a sufficiently smooth function, if nodes are not suitably chosen, then the interpolation polynomials do not converge to the function as the number of nodes tends to infinity. A well-known example is the Runge's phenomenon. In particular, if one uses equi-spaced nodes to interpolate the Runge's function $f(x)=\frac{1}{1+25x^2}$ over the interval $[-1,1]$, then the errors of Lagrange polynomial interpolation blow up to infinity as the number of nodes increases (see, e.g., \cite{FB}). 

Let $f$ be a continuous function on $[-1,1]$, let $\bm{c} := (c_i)_{i=0}^{s}$, $c_i\in [-1,1]$, and let $L_{\bm{c}}(f)(x)$ 
be the Lagrange interpolation polynomial of $f$ over the nodes $(c_i)_{i=0}^{s}$. 
It is well-known from interpolation theory that 
\begin{equation}
\max_{x\in [-1,1]}|L_{\bm{c}}(f)(x)-f(x)| \le (1+\Lambda(\bm{c}))\max_{x\in [-1,1]}|P^*(x) - f(x)|,
\end{equation}
where $P^*(x)$ is the best polynomial approximation of degree $s$ and $\Lambda(\bm{c})$ is the Lebesgue constant corresponding to the node distribution $\bm{c}=(c_i)_{i=0}^{s}$. The Lebesgue constant $\Lambda(\bm{c})$ indicates how far the Lagrange interpolation polynomial $L_{\bm{c}}(x)$ is from the best polynomial approximation of degree $s$. Lebesgue constants have been studied extensively in the literature (see, e.g, \cite{Brutman}, \cite{FB}, \cite{Henry}, \cite{Rack}, \cite{Smith}, \cite{Trefethen}, \cite{Hes98}, and references therein). 
It is of interest to find a node distribution for which the Lebesgue constant is minimal among all node distributions with the same number of nodes. This node distribution if existing is called an optimal node distribution. It is known that for a given number of nodes, the optimal node distribution may not be unique. If one wants these nodes to include boundary points, then such optimal node distribution is unique (cf. \cite{Henry}). 
However, finding these node distributions is not an easy task. In practice, one often uses Chebyshev-Gauss-Lobatto nodes for interpolation and pseudospectral methods. These nodes are extrema of Chebyshev polynomials of the first kind over $[-1,1]$. 

The most commonly used node distribution is 
Gauss-Chebyshev-Lobatto points. These points are extrema of Chebyshev polynomial $T_{s}$ over $[-1,1]$, i.e., 
\begin{equation}
\label{eq5}
c_i = \cos(\frac{i\pi}{s}),\qquad i=0,...,s. 
\end{equation}
This node distribution is also referred to as Chebyshev points. 
In \cite{Hes98} the Lebesgue constant of this node distribution was studied. It was proved that the Lebesgue constant for Chebyshev-Gauss-Lobatto nodes in \eqref{eq5} satisfies the estimate (see, e.g., \cite{FB}, \cite{Hes98})
\begin{equation}
\label{eq42}
\Lambda^{CGL}(n) = \frac{2}{\pi}\bigg(\ln n + \gamma + \ln\frac{8}{\pi}\bigg) +  O\big(\frac{1}{n^2}\big),
\end{equation}
where $\gamma=0.577215$ is the Euler constant and $n$ is the number of nodes.  
 
Although Chebyshev-Gauss-Lobatto node distribution works well in practice, it is not optimal in the sense that the Lebesgue constant for this node distribution is minimal among Lebesgue constants based on node distributions of the same number of nodes. 
It is well-known that for each function $f$ there is an optimal node distribution for interpolating the function. 
This optimal node distribution varies from functions to functions. When $f$ is known, there are algorithms for finding an optimal node distribution for interpolating $f$. However, these algorithms are not efficient in practice. In many cases, these algorithms are not applicable since the function to be interpolated is not known. This is the case when $f$ is a solution to a differential or an integral equation. 

It was proved that the optimal Lebesgue constant satisfies the following estimate
(see, e.g., \cite{Vertesi})
\begin{equation}
\label{eq43}
\Lambda^{min}(n) = \frac{2}{\pi}\bigg(\ln n + \gamma + \ln\frac{4}{\pi}\bigg) + O(\frac{1}{(\ln n)^{1/3}}). 
\end{equation}
From equations \eqref{eq42} and \eqref{eq43} one can see that the Lebesgue constant of Chebyshev-Gauss-Lobatto nodes is very close to the optimal one.

In \cite{Gunttner} the Lebesgue contant for a scaled Chebyshev node distribution was studied. These nodes are obtained by scaling zeros of the Chebyshev polynomial $T_{s+1}(x)$. 
In particular, the scaled Chebyshev nodes $(c_i)_{i=1}^s$ in \cite{Gunttner} are defined as follows
\begin{equation}
\label{eq6}
c_i = \cos\bigg(\frac{2i+1}{2(s+1)}\pi\bigg)\bigg[\cos\bigg(\frac{1}{2(s+1)}\pi\bigg)\bigg]^{-1},\qquad i=0,...,s. 
\end{equation}
The Lebesgue constant of the scaled Chebyshev node distribution satisfies the following estimate (see, e.g., \cite{Gunttner}, \cite{Smith})
\begin{equation}
\Lambda^{sC}(n) = \frac{2}{\pi}\bigg(\ln n + \gamma + \ln\frac{8}{\pi} - \frac{2}{3}\bigg) +  O\big(\frac{1}{\ln n}\big).
\end{equation}  
Note that $\ln\frac{8}{\pi} - \frac{2}{3} \approx \ln\frac{4}{\pi}+0.0265$ and $\ln\frac{8}{\pi} \approx \ln\frac{4}{\pi} + 0.24$. 
Thus, for ``large'' $n$, the Lebesgue constants of the scaled Chebyshev points are closer to the optimal Lebesgue contants compared to the Lebesgue constants of Chebyshev-Gauss-Lobatto points. The scaled Chebyshev nodes are often mentioned as the optimal choice in practice for interpolation (cf. \cite{Henry}). However, to the author's knowledge, there is no justification for the optimality of this choice in any sense. 

In practice one often uses Chebyshev-Gauss-Lobatto nodes and scaled Chebyshev nodes for interpolation and 
pseudospectral methods. 

In this paper, we study node distributions for interpolation and pseudospectral methods over the class of functions 
$$
C_M^{s+1}[-1,1]:=\{f\in C^{s+1}[-1,1]: \max_{x\in[-1,1]}|f^{(s+1)}(x)|\le M\}.
$$
It turns out that the scaled Chebyshev nodes are optimal for interpolation over $C_M^{s+1}[-1,1]$. We also construct node distributions for computing differentiation matrices over $C_M^{s+1}[-1,1]$. Numerical experiments with the new node distributions in Section \ref{sec4} (see below) showed that these nodes yield better results than Chebyshev-Gauss-Lobatto points do. 

The paper is organized as follows. 
In Section \ref{sec2} we study node distributions for interpolation. 
We prove that the scaled Chebyshev nodes are ``optimal'' for interpolation over $C_M^{s+1}[-1,1]$.
In Section \ref{sec3} node distributions for calculating differentiation matrices are proposed and justified. 
In Section \ref{sec4} numerical experiments are carried out with the new node distributions.

\section{Interpolation}
\label{sec2}

Let $L_{\bm{c}}(f)$ denote the Lagrange interpolation polynomial of a sufficiently smooth function $f$ over the nodes $\bm{c}=(c_i)_{i=0}^s$, $c_i\in [-1,1]$. 
The error of Lagrange interpolation is given by the formula (see, e.g., \cite{Kress})
\begin{equation}
\label{eq1}
L_{\bm{c}}(f)(x)-f(x) = \frac{f^{(s+1)}(\xi(x))}{(s+1)!}\prod_{i=0}^s (x-c_i),\qquad \xi(x)\in[-1,1].
\end{equation}
We are interested in finding a node distribution $\bm{c}$ so that the interpolation error $\|L_{\bm{c}}(f)-f\|_\infty$ is as small as possible. Here, $\|g\|_\infty$ denotes the sup-norm of $g$ over the interval $[-1,1]$, i.e., 
$\|g\|_\infty:=\sup_{x\in[-1,1]}|g(x)|$. 
Note that the element $\xi(x)$ in \eqref{eq1} depends on $x$ and $(c_i)_{i=0}^s$ in a nontrivial manner. Therefore, to minimize $\|L_{\bm{c}}(f)-f\|_\infty$ one often tries to find a distribution of $(c_i)_{i=0}^s$, $c_i\in [-1,1]$, so that 
\begin{equation}
\label{eq2}
\max_{-1\le x\le 1}\bigg|\prod_{i=0}^s (x-c_i)\bigg|\to \min.
\end{equation}
It is well-known that the zeros of $T_{s+1}(x)$, 
the Chebyshev polynomial of order $s+1$ of the first kind over $[-1,1]$, are the solution to \eqref{eq2}. These zeros are given by the formula
\begin{equation}
\label{eq3}
c_i = \cos\bigg(\frac{2i+1}{2(s+1)}\pi\bigg),\qquad i=0,...,s. 
\end{equation}

In practice one often wants to have boundary points as interpolation nodes, i.e., $c_0=-1$ and $c_s = 1$. Let  
\begin{equation}
\mathcal{C}:=\{\bm{c}=(c_i)_{i=0}^s: -1=c_0<c_1<...<c_{s-1}<c_s = 1\}. 
\end{equation}
The following question arises: for which set of points $(c_i)_{i=0}^{s}\in \mathcal{C}$, we have
\begin{equation}
\label{eq4}
\max_{-1\le x\le 1}\bigg|\prod_{i=0}^{s} (x-c_i)\bigg|\to \min_{\bm{c}\in \mathcal{C}}\, ?
\end{equation}
The answer is given in the following result:

\begin{theorem}
\label{theorem1}
Let $(\bar{c}_i)_{i=0}^s\in \mathcal{C}$ be a solution to \eqref{eq4}, i.e., 
\begin{equation}
\label{eq8}
\max_{-1\le x\le 1}\bigg|\prod_{i=0}^{s}(x-\bar{c}_i)\bigg| = \min_{\bm{c}\in \mathcal{C}} \max_{-1\le x\le 1}\bigg|\prod_{i=0}^{s}(x-c_i)\bigg|.
\end{equation}
Then this solution is uniquely and $(\bar{c}_i)_{i=0}^s$ are determined by
\begin{equation}
\label{eq9}
\bar{c}_i = \cos\bigg(\frac{2i+1}{2(s+1)}\pi\bigg)\bigg[\cos\bigg(\frac{\pi}{2(s+1)}\bigg)\bigg]^{-1},\qquad i=0,...,s.
\end{equation}
\end{theorem}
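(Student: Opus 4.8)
The plan is to read the product $\prod_{i=0}^s(x-c_i)$, for $\bm c\in\mathcal C$, as a monic polynomial of degree $s+1$ that vanishes at $x=\pm1$; conversely every monic degree-$(s+1)$ polynomial vanishing at $\pm1$ equals $(x^2-1)q(x)$ with $q$ monic of degree $s-1$, and those arising from $\mathcal C$ are exactly the ones with $q$ having simple real zeros in $(-1,1)$. Since the candidate minimizer will turn out to have this property, it suffices to solve \eqref{eq4} over the larger class of all monic degree-$(s+1)$ polynomials vanishing at $\pm1$. First I would verify that the claimed solution is
$$p^*(x):=\prod_{i=0}^s(x-\bar c_i)=\frac{1}{2^{s}\beta^{s+1}}\,T_{s+1}(\beta x),\qquad \beta:=\cos\frac{\pi}{2(s+1)},$$
by matching roots and leading coefficient: $T_{s+1}(\beta x)$ vanishes exactly at the scaled Chebyshev points \eqref{eq9}, the prefactor makes $p^*$ monic, and $\bar c_0=1$, $\bar c_s=-1$ give $p^*(\pm1)=0$.

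The crucial computation is $\|p^*\|_\infty$. As $x$ runs over $[-1,1]$ the argument $\beta x$ runs over $[-\beta,\beta]\subset(-1,1)$. The interior extrema of $T_{s+1}$ occur at $\cos\frac{k\pi}{s+1}$, $k=1,\dots,s$, where $T_{s+1}=(-1)^k$, and I would check that all of them lie in $[-\beta,\beta]$, which holds precisely because $\cos\frac{\pi}{s+1}<\cos\frac{\pi}{2(s+1)}=\beta$. Hence $\max_{[-1,1]}|T_{s+1}(\beta x)|=1$, so $\|p^*\|_\infty=(2^s\beta^{s+1})^{-1}$, attained with alternating signs at the $s$ interior points $\xi_k:=\beta^{-1}\cos\frac{k\pi}{s+1}\in(-1,1)$. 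This is where the scaling is forced: $\beta=\cos\frac{\pi}{2(s+1)}$ is exactly the value sending the two outermost zeros of $T_{s+1}$ to $\pm1$, and for this $\beta$ all $s$ interior extrema remain inside $[-\beta,\beta]$.

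For optimality I would argue by contradiction. Suppose a competitor $p$, monic of degree $s+1$ and vanishing at $\pm1$, had $\|p\|_\infty<\|p^*\|_\infty$. Since $|p(\xi_k)|\le\|p\|_\infty<|p^*(\xi_k)|$, the sign of $p^*(\xi_k)-p(\xi_k)$ equals that of $p^*(\xi_k)$, so $p^*-p$ alternates in sign across $\xi_1>\cdots>\xi_s$, producing $s-1$ zeros in $(\xi_s,\xi_1)$; together with the two zeros at $x=\pm1$ this gives at least $s+1$ zeros. But $p^*-p$ is a difference of monic polynomials of degree $s+1$, hence of degree at most $s$, so $p^*-p\equiv0$, contradicting $\|p\|_\infty<\|p^*\|_\infty$. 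Thus $p^*$ solves \eqref{eq4}.

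For uniqueness I would use convexity of the sup-norm: if $p$ is another minimizer, then $r=\tfrac12(p+p^*)$ is admissible with $\|r\|_\infty=\|p^*\|_\infty$, and at any point where $|r|$ equals this minimal value both $p$ and $p^*$ must take that same extremal value. Combining this with the fact that a minimizer must equioscillate at at least $s$ interior points forces $p$ and $p^*$ to agree at those $s$ points and at $\pm1$, i.e.\ at $\ge s+2$ points, whence $p^*-p\equiv0$ again by the degree bound. The main obstacle is precisely this uniqueness step: here the alternation of $p^*-p$ at the $\xi_k$ is only weak, $(-1)^k(p^*-p)(\xi_k)\ge0$, so one must argue carefully that ``touch'' points, where $p^*-p$ vanishes without crossing, still contribute enough zeros, counted with multiplicity, to exceed the degree bound. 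The equioscillation count of the second paragraph---confirming that exactly the $s$ interior extrema remain inside $[-\beta,\beta]$---is the other step requiring care, since it is what pins down the scaling in \eqref{eq9}.
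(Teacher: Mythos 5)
Your proposal is correct and follows essentially the same route as the paper's proof: identify the extremal polynomial as a rescaled $T_{s+1}$, locate its $s$ interior equioscillation points $\xi_k=\cos(\tfrac{k\pi}{s+1})/\cos(\tfrac{\pi}{2(s+1)})$, and annihilate the difference with any competitor by counting $s+1$ zeros of a polynomial of degree at most $s$. Two points where your write-up is actually more careful than the paper's: you retain the leading-coefficient factor $2^{-s}$ that the paper's \eqref{eq7} and \eqref{eq10} silently drop (the extremal value is $2^{-s}\big[\cos(\tfrac{\pi}{2(s+1)})\big]^{-(s+1)}$, not $\big[\cos(\tfrac{\pi}{2(s+1)})\big]^{-(s+1)}$), and you explicitly isolate the weak-alternation/multiplicity issue in the uniqueness step (your strict-inequality contradiction handles optimality cleanly, while the paper's combined argument asserts the $s-1$ interior zeros of $R$ from $(-1)^iR(d_i)\ge 0$ without addressing possible touch points).
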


\begin{proof}
Let 
$$
P(x) := \prod_{i=0}^s (x-\bar{c}_i),
$$
where $\bar{c}_i$, $i=0,...,s$, are defined by \eqref{eq9}. 
Then
\begin{equation}
\begin{split}
P\bigg(\frac{x}{\cos(\frac{\pi}{2(s+1)})}\bigg) &= \bigg[\frac{1}{\cos(\frac{\pi}{2(s+1)})}\bigg]^{s+1}\prod_{i=0}^s \bigg(x-\cos\big(\frac{2i+1}{2(s+1)}\pi\big)\bigg) \\
&= \bigg[\frac{1}{\cos(\frac{\pi}{2(s+1)})}\bigg]^{s+1}T_{s+1}(x),
\end{split}
\end{equation}
where $T_{s+1}(x)$ is the Chebyshev polynomial of the first kind over $[-1,1]$ of degree $s+1$. Therefore,
\begin{equation}
\label{eq7}
P(x) = \bigg[\frac{1}{\cos(\frac{\pi}{2(s+1)})}\bigg]^{s+1}T_{s+1}\bigg(x\cos(\frac{\pi}{2(s+1)})\bigg).
\end{equation}
Note that $\big(\cos(\frac{i\pi}{s+1})\big)_{i=1}^{s}$ are all critical points of the Chebyshev polynomial $T_{s+1}(x)$ and $|T_{s+1}(x)|\le 1$, $\forall x\in [-1,1]$. This and equation \eqref{eq7} imply that all critical points of $P(x)$ are
\begin{equation}
\label{eq13}
d_i = \frac{\cos(\frac{i\pi}{s+1})}{\cos(\frac{\pi}{2(s+1)})},\qquad i=1,...,s,
\end{equation}
and we have
\begin{equation}
\label{au4eq1}
P(d_i) = \bigg[\frac{1}{\cos(\frac{\pi}{2(s+1)})}\bigg]^{s+1} (-1)^i,\qquad i=1,...,s.
\end{equation}
Therefore, 
\begin{equation}
\label{eq10}
\min_{(c_i)_{i=0}^s\in\mathcal{C}}\max_{-1\le x\le 1}\bigg|\prod_{i=0}^{s}(x-c_i)\bigg| \le
\max_{x\in[-1,1]}|P(x)| = \bigg[\frac{1}{\cos(\frac{\pi}{2(s+1)})}\bigg]^{s+1}. 
\end{equation}

Let $(\tilde{c}_i)_{i=0}^s$ be a solution to \eqref{eq4}. Let us prove that $\tilde{c}_i=\bar{c}_i$ where $\bar{c}_i$, $i=0,..,s$, are defined by \eqref{eq9}. 
Let 
\begin{equation}
\label{eq14}
Q(x) := \prod_{i=0}^s(x-\tilde{c}_i),\qquad (\tilde{c}_i)_{i=0}^s \in \mathcal{C},
\end{equation}
and 
\begin{equation}
\label{eq11}
R(x) := Q(x) - P(x).
\end{equation}
Since $P(x)$ and $Q(x)$ are monic polynomials of degree $s+1$, one concludes from 
\eqref{eq11} that $R(x)$ is a polynomial of degree at most $s$. 

Since $(\tilde{c}_i)_{i=0}^s$ is a solution to \eqref{eq4} and \eqref{eq10} holds, one gets
\begin{equation}
\label{eq12}
|Q(x)| \le  \bigg[\frac{1}{\cos(\frac{\pi}{2(s+1)})}\bigg]^{s+1} ,\qquad \forall x\in [-1,1].
\end{equation}
From \eqref{au4eq1}, \eqref{eq11}, and \eqref{eq12}, one obtains
\begin{equation}
R(d_i)(-1)^i \ge 0,\qquad i=1,...,s.
\end{equation}
Thus, the polynomial $R(x)$ has at least $s-1$ zeros on the interval $[-d_1,d_{s}]\subset (-1,1)$. Since 
$\bar{c}_0=\tilde{c}_0=-1$ and $\bar{c}_s=\tilde{c}_s=1$, it is clear that 
$-1$ and $1$ are zeros of $Q(x)$ and $P(x)$. Thus, $-1$ and $1$ are also zeros of $R(x)$. Therefore, $R(x)$ has a total of $s+1$ zeros on the interval $[-1,1]$. This and the fact that $R(x)$ is a polynomial of degree at most $s$ imply that $R(x)=0$. Thus, $Q(x)\equiv P(x)$. Therefore, $\tilde{c}_i=\bar{c}_i$, $i=0,...,s$.

Theorem \ref{theorem1} is proved. 
\end{proof}

\begin{remark}{\rm
From the proof of Theorem \ref{theorem1}, one gets
\begin{equation}
\label{au5e2}
\min_{\bm{c}\in \mathcal{C}} \max_{-1\le x\le 1}\bigg|\prod_{i=0}^{s}(x-c_i)\bigg| = 
\max_{-1\le x\le 1}\bigg|\prod_{i=0}^{s}(x-\bar{c}_i)\bigg|= \bigg[\frac{1}{\cos(\frac{\pi}{2(s+1)})}\bigg]^{s+1}. 
\end{equation}
}
\end{remark}

Let 
\begin{equation}
\label{eq15}
C^{s+1}_M:=\big\{f\in C^{s+1}[-1,1]: \max_{x\in [-1,1]}|f^{(s+1)}(x)|\le M \big \},\quad M>0. 
\end{equation}
Let $L_{\bm{c}}(f)$ denote the Lagrange interpolation polynomial of $f$ over the nodes $\bm{c}:=(c_i)_{i=1}^s$. We are interested in solving the following problem
\begin{equation}
\label{eq40}
\min_{\bm{c}\in\mathcal{C}}\sup_{f\in C^{s+1}_M} \|f-L_{\bm{c}}(f)\|_\infty.
\end{equation}
Here $\|g\|_\infty:=\sup_{x\in [-1,1]}|g(x)|$. 

We have the following result:
\begin{theorem}
\label{theorem2}
Let $\bar{\bm{c}}:=(\bar{c}_i)_{i=0}^s$ where $(\bar{c}_i)_{i=0}^s$ are defined by \eqref{eq6}. Then $\bar{\bm{c}}$ is the solution to problem \eqref{eq40}. 
\end{theorem}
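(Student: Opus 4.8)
The plan is to reduce the minimax problem \eqref{eq40} to the purely polynomial extremal problem \eqref{eq4}, which Theorem \ref{theorem1} has already solved. The bridge is the identity, valid for each fixed $\bm{c}\in\mathcal{C}$,
\[
\sup_{f\in C^{s+1}_M}\|f-L_{\bm{c}}(f)\|_\infty = \frac{M}{(s+1)!}\max_{-1\le x\le 1}\bigg|\prod_{i=0}^{s}(x-c_i)\bigg|.
\]
Once this is in hand, the factor $\frac{M}{(s+1)!}$ is a positive constant independent of $\bm{c}$, so minimizing the left-hand side over $\bm{c}\in\mathcal{C}$ is equivalent to minimizing $\max_{x}\big|\prod_{i=0}^s(x-c_i)\big|$ over $\mathcal{C}$, i.e.\ to \eqref{eq4}. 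Theorem \ref{theorem1} then identifies the unique minimizer as \eqref{eq9}, which is exactly the distribution \eqref{eq6}, completing the argument.

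The upper bound ``$\le$'' in the identity is immediate from the Lagrange error formula \eqref{eq1}. For any $f\in C^{s+1}_M$ and any $x\in[-1,1]$ we have $|f^{(s+1)}(\xi(x))|\le M$, hence
\[
|f(x)-L_{\bm{c}}(f)(x)| \le \frac{M}{(s+1)!}\bigg|\prod_{i=0}^{s}(x-c_i)\bigg|,
\]
and taking the supremum over $x$ gives the inequality. The step that requires genuine care is the reverse inequality ``$\ge$'', because \eqref{eq1} by itself does not show the bound is attained: the intermediate point $\xi(x)$ depends on $x$ in an uncontrolled way, so one cannot simply arrange $f^{(s+1)}$ to equal $M$ at each relevant $\xi(x)$. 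I would instead exhibit one explicit extremal function. Take $f_0(x):=\frac{M}{(s+1)!}\,x^{s+1}$, so that $f_0^{(s+1)}\equiv M$ and thus $f_0\in C^{s+1}_M$. Since $L_{\bm{c}}(f_0)$ has degree at most $s$, the difference $f_0-L_{\bm{c}}(f_0)$ is a polynomial of degree exactly $s+1$ with leading coefficient $\frac{M}{(s+1)!}$ that vanishes at all $s+1$ nodes $c_0,\dots,c_s$; it therefore coincides identically with $\frac{M}{(s+1)!}\prod_{i=0}^{s}(x-c_i)$. Consequently
\[
\|f_0-L_{\bm{c}}(f_0)\|_\infty = \frac{M}{(s+1)!}\max_{-1\le x\le 1}\bigg|\prod_{i=0}^{s}(x-c_i)\bigg|,
\]
so the supremum is attained at $f_0$ and the identity holds.

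The only real obstacle I anticipate is precisely this attainment argument: the temptation is to read tightness off the formula \eqref{eq1}, which is not legitimate, and the remedy is the explicit monomial $f_0$, whose interpolation remainder is computable exactly because interpolating a degree-$(s+1)$ polynomial by a degree-$s$ one leaves a remainder proportional to the nodal polynomial $\prod_{i=0}^s(x-c_i)$. With the identity established, the remainder of the proof is a direct invocation of Theorem \ref{theorem1} together with the observation that \eqref{eq6} and \eqref{eq9} define the same nodes.
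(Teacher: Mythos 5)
Your proposal is correct and follows essentially the same route as the paper: the upper bound comes from the Lagrange remainder formula together with \eqref{au5e2}, and the lower bound comes from an explicit polynomial of degree $s+1$ whose $(s+1)$-th derivative is identically $M$ (the paper's $P_0$ is your $f_0$ up to irrelevant lower-order terms), after which Theorem \ref{theorem1} identifies the minimizer. Your formulation as a per-$\bm{c}$ identity for $\sup_{f\in C^{s+1}_M}\|f-L_{\bm{c}}(f)\|_\infty$ is a slightly cleaner packaging of the same argument, and it makes the uniqueness of the solution to \eqref{eq40} (which the paper relegates to a remark) immediate.
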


\begin{proof}
Let $\bm{c} = (c_i)_{i=0}^s\in \mathcal{C}$ be an arbitrary node distribution over $[-1,1]$. 
The error of Lagrange interpolation is given by the formula (see, e.g., \cite{Kress})
\begin{equation}
\label{eq28.0}
f(x) - L_{\bm{c}}(f)(x) = \frac{f^{(s+1)}(\xi(x))}{(s+1)!}\prod_{i=0}^s (x-c_i),\qquad \xi(x) \in [-1,1].
\end{equation}
From equations \eqref{eq28.0} and \eqref{au5e2} one gets
\begin{equation}
\label{eq17}
\begin{split}
\min_{\bm{c}\in\mathcal{C}}\sup_{f\in C^{s+1}_M} \|f-L_{\bm{c}}(f)\|_\infty 
&\le \sup_{f\in C^{s+1}_M} \|f-L_{\bar{\bm{c}}}(f)\|_\infty \\
&\le \frac{M}{(s+1)!}\max_{x\in [-1,1]}\bigg|\prod_{i=0}^s (x-\bar{c}_i)\bigg| \\
&= \frac{M}{(s+1)!}\bigg[\frac{1}{\cos(\frac{\pi}{2(s+1)})}\bigg]^{s+1}. 
\end{split}
\end{equation}

Let $P_{0}(x)$ be a polynomial of degree $s+1$ such that $P_0^{(s+1)}(x) \equiv M$. Using formula \eqref{eq28.0} for $f(x)=P_{0}(x)$, one gets
\begin{equation}
\label{jl31eq1}
P_0(x) - L_{\bm{c}}(P_0)(x) = \frac{P_0^{(s+1)}(\xi(x))}{(s+1)!}\prod_{i=0}^s (x-c_i) = \frac{M}{(s+1)!}\prod_{i=0}^s (x-c_i). 
\end{equation}
From equations \eqref{jl31eq1} and \eqref{au5e2} we have
\begin{equation}
\label{eq16}
\begin{split}
\min_{\bm{c}\in\mathcal{C}}\sup_{f\in C^{s+1}_M} \|f-L_{\bm{c}}(f)\|_\infty &\ge \min_{\bm{c}\in\mathcal{C}} \|P_0-L_{\bm{c}}(P_0)\|_\infty\\
& = \frac{M}{(s+1)!}\min_{\bm{c}\in\mathcal{C}}\max_{x\in[-1,1]}\bigg|\prod_{i=0}^s (x - c_i)\bigg| \\
& = \frac{M}{(s+1)!}\bigg[\frac{1}{\cos(\frac{\pi}{2(s+1)})}\bigg]^{s+1}.
\end{split}
\end{equation}

From equation \eqref{eq16} and \eqref{eq17}, we conclude that 
\begin{equation}
\min_{\bm{c}\in\mathcal{C}}\sup_{f\in C^{s+1}_M} \|f-L_{\bm{c}}(f)\|_\infty = \frac{M}{(s+1)!}\bigg[\frac{1}{\cos(\frac{\pi}{2(s+1)})}\bigg]^{s+1},
\end{equation}
and $\bar{\bm{c}}:=(\bar{c}_i)_{i=0}^s$, where $(\bar{c}_i)_{i=0}^s$ are defined by \eqref{eq6}, is the solution to \eqref{eq40}. 

Theorem \ref{theorem2} is proved. 
\end{proof}

\begin{remark}
Since the solution to \eqref{eq4} is unique, it follows from the proof of Theorem \ref{theorem2} that the solution to \eqref{eq40} is unique. 
Theorem \ref{theorem2} says that the node distribution from equation \eqref{eq6} is optimal in the sense of \eqref{eq40}. 
Namely, the node distribution defined by \eqref{eq6} is optimal for interpolation over the set of functions $C_M^{s+1}[-1,1]$.
\end{remark}

\section{Spectral differentiation matrices}
\label{sec3}

In many problems one is interested in finding the first derivative $f'$ of a function $f\in C^1[-1,1]$ based on  
values of $f$ at $(c_i)_{i=0}^s$, $c_i\in [-1,1]$. One of the approach is to use $(L_{\bm{c}}(f))'$ as an approximation to $f'$ where $L_{\bm{c}}(f)$ is the Lagrange interpolation polynomial of the function $f$ over the nodes $(c_i)_{i=0}^s$. Thus, the following problem arises
\begin{equation}
\label{eq28}
\min_{\bm{c}\in \mathcal{C}}\max_{x\in [-1,1]}|(L_{\bm{c}}(f))'(x) - f'(x)|,\qquad f\in C^1[-1,1]. 
\end{equation} 
Unfortunately, a solution $\bm{c}$ to \eqref{eq28} if existing is not 
independent of $f$, in general, and is not easy to find even when $f$ belongs to the  
class of functions $C^{s+1}_M[-1,1]$.

Let $f\in C^{s+1}[-1,1]$ and $\bm{c}=(c_i)_{i=0}^s$ be a node distribution over $[-1,1]$. 
Let $\lambda_k$ satisfy
\begin{equation}
\label{eq18.0}
f'(c_k) - (L_{\bm{c}}(f))'(c_k) = \lambda_k \frac{d}{dx}\prod_{i=0}^s (x-c_i)\bigg|_{x=c_k},\qquad 0\le k\le s. 
\end{equation} 
It is clear that $(c_i)_{i=0}^s$ are $s+1$ zeros of the function (cf. \eqref{eq1})
\begin{equation}
\label{eq29}
R_k(x) := f(x) - L_{\bm{c}}(f)(x) - \lambda_k \prod_{i=0}^s (x-c_i).
\end{equation}
According to Rolle's Theorem the function $R_k'(x)$ has at least $s$ zeros $(\eta_{ki})_{i=1}^s$ on the interval 
$[c_0,c_s]$ and $\eta_{ki}\not = c_j$. Therefore, $R_k'(x)$ has at least $s+1$ zeros on the interval $[-1,1]$ which are $(\eta_{ki})_{i=1}^s$ and $c_k$ (see \eqref{eq18.0}). Thus, by Rolle's Theorem, there exists $\zeta_k\in (-1,1)$ such that $R_k^{(s+1)}(\zeta_k) = 0$. This and \eqref{eq29} imply
\begin{equation}
0 = R_k^{(s+1)}(\zeta_k) = f^{(s+1)}(\zeta_k) - \lambda_k (s+1)!.
\end{equation}
Therefore, $\lambda_k = f^{(s+1)}(\zeta_k)/(s+1)!$ and we get from \eqref{eq18.0} the following relations
\begin{equation}
\label{eq18}
f'(c_k) - (L_{\bm{c}}(f))'(c_k) = \frac{f^{(s+1)}(\zeta_k)}{(s+1)!} \frac{d}{dx}\prod_{i=0}^s (x-c_i)\bigg|_{x=c_k},\qquad k=0,...,s. 
\end{equation}

Note that if $f\in C^{s+2}[-1,1]$, then 
equation \eqref{eq18} can also be obtained by differentiating equation \eqref{eq1} with respect to $x$ and assigning $x=c_k$. 


Fix $\bar{x}\in [-1,1]$
and $\bar{x}\not=c_i$, $i=0,...,s$. Let 
\begin{equation}
R(x) := f(x) - L_{\bm{c}}(f)(x) - \frac{f^{(s+1)}(\xi(\bar{x}))}{(s+1)!} \prod_{i=0}^s (x-c_i). 
\end{equation}
Then $R(x)$ has $s+2$ zeros which are $(c_i)_{i=0}^s$ and $\bar{x}$ (cf. \eqref{eq1}). Thus, by Rolle's Theorem, the function $R'(x)$ has at least 
$s+1$ zeros on $[-1,1]$. Let $(\eta_i)_{i=0}^s$ be zeros of $R'(x)$. Then one gets
\begin{equation}
\label{eq30}
f'(\eta_k) - (L_{\bm{c}}(f))'(\eta_k) = \frac{f^{(s+1)}(\xi_{\bar{x}})}{(s+1)!} \frac{d}{dx}\prod_{i=0}^s (x-c_i)\bigg|_{x=\eta_k},\qquad k=0,...,s. 
\end{equation}

From \eqref{eq18} and \eqref{eq30} one may ask whether or not there exists a constant $C>0$ such that
\begin{equation}
\label{jl30e1}
|f'(x) - (L_{\bm{c}}(f))'(x)| \le C  \bigg|\frac{d}{dx}\prod_{i=0}^s (x-c_i)\bigg|,\quad x\in [-1,1],\quad f\in C^{s+1}_M.
\end{equation}
Unfortunately, the answer to this question is negative. It is because zeros of the right side of \eqref{jl30e1} are, in general, not zeros of the left side of \eqref{jl30e1}. In particular, if $\xi$ is a zero of the right side of \eqref{jl30e1} but is not a zero of the left side of \eqref{jl30e1}, then equation \eqref{jl30e1} does not hold for any $C>0$ when  
$x=\xi$. 

To minimize the interpolation error $\|f' - (L_{\bm{c}}(f))'\|_\infty$, taking into account formulae \eqref{eq18} and \eqref{eq30}, 
we consider the following problem
\begin{equation}
\label{eq21}
\max_{-1\le x\le 1} \bigg|\frac{d}{dx}\prod_{i=0}^s (x-c_i)\bigg| \longrightarrow \min_{\bm{c}\in \mathcal{C}}. 
\end{equation}
From the theory of Chebyshev polynomials one concludes that the solution to problem \eqref{eq21} is a node distribution $(c_i)_{i=0}^s$ such that
\begin{equation}
\label{eq22}
\frac{d}{dx}\prod_{i=0}^s (x-c_i) = \frac{(s+1)T_{s}(x)}{2^{s-1}}.
\end{equation}
Thus, we want to find $(c_i)_{i=0}^s$ so that
\begin{equation}
\label{eq31}
\prod_{i=0}^s (x-c_i) = \int_0^x \frac{(s+1)T_{s}(\xi)}{2^{s-1}} d\xi + C,
\end{equation}
where $C$ is a suitable constant. 

To find $(c_i)_{i=0}^s$ satisfying \eqref{eq31} we need the following lemma:
\begin{lemma}
\label{lemma1}
Let $T_{s}$ be the Chebyshev polynomial of degree $s$ over the interval $[-1,1]$. Then
\begin{equation}
\label{eq23}
\int T_{s}(x)dx  = 
\frac{1}{2}\bigg(\frac{T_{s+1}(x)}{s+1} - \frac{T_{s-1}(x)}{s-1}\bigg) + C,\qquad C=const.
\end{equation}
\end{lemma}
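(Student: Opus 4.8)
The plan is to verify the claimed antiderivative in the cleanest possible way, namely by differentiating the right-hand side and checking that the result equals $T_s(x)$. This reduces an integration problem to a polynomial identity, which is easier to control. The only external facts I will invoke are the standard relation $T_n'(x) = n\,U_{n-1}(x)$ linking the Chebyshev polynomials of the first and second kind, together with the trigonometric representations $T_n(\cos\theta) = \cos(n\theta)$ and $U_n(\cos\theta) = \sin((n+1)\theta)/\sin\theta$, all of which are classical. Differentiating term by term, and using $\frac{d}{dx}T_{s+1}(x) = (s+1)U_s(x)$ and $\frac{d}{dx}T_{s-1}(x) = (s-1)U_{s-2}(x)$, the factors $s+1$ and $s-1$ cancel against the denominators, so the derivative of the right-hand side collapses to $\frac{1}{2}\left(U_s(x) - U_{s-2}(x)\right)$. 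It then remains only to show that this difference equals $2T_s(x)$.

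To establish the identity $U_s - U_{s-2} = 2T_s$, I would substitute $x = \cos\theta$ with $\theta\in(0,\pi)$ and pass to the trigonometric forms. This reduces the claim to $\sin((s+1)\theta) - \sin((s-1)\theta) = 2\cos(s\theta)\sin\theta$, which is immediate from the sum-to-product formula $\sin A - \sin B = 2\cos\frac{A+B}{2}\sin\frac{A-B}{2}$ taken with $A=(s+1)\theta$ and $B=(s-1)\theta$, since then $\frac{A+B}{2}=s\theta$ and $\frac{A-B}{2}=\theta$. Dividing by $\sin\theta\neq 0$ yields $U_s - U_{s-2} = 2T_s$ on the open interval $(-1,1)$; as both sides are polynomials agreeing on an interval, the identity holds on all of $[-1,1]$ and indeed as a polynomial identity. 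Combining this with the previous step shows the derivative of the right-hand side is exactly $T_s(x)$, which proves the lemma up to the additive constant $C$.

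There is no serious obstacle here, as the computation is routine; the one point genuinely requiring care is the implicit hypothesis $s\geq 2$, without which the term $T_{s-1}(x)/(s-1)$ is undefined (division by zero when $s=1$) or involves $T_{-1}$ (when $s=0$), and I would state this restriction explicitly. As an alternative to the differentiation argument, one could instead integrate directly under the same substitution $x=\cos\theta$, $dx=-\sin\theta\,d\theta$, rewriting the integrand via the product-to-sum identity $\sin\theta\cos(s\theta) = \frac{1}{2}\left(\sin((s+1)\theta) - \sin((s-1)\theta)\right)$ and integrating in $\theta$; this route reconstructs the right-hand side directly and produces the same constant structure, which provides a useful independent check.
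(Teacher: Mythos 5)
Your proof is correct, but it runs in the opposite direction from the paper's. The paper computes the antiderivative constructively: it substitutes $v=\arccos x$, so $\int T_s(x)\,dx=-\int\cos(sv)\sin v\,dv$, applies the product-to-sum identity $\cos(sv)\sin v=\tfrac12\left[\sin((s+1)v)-\sin((s-1)v)\right]$, and integrates in $v$ to land on the stated right-hand side. You instead take the right-hand side as given, differentiate it using $T_n'=nU_{n-1}$, and reduce the claim to the identity $U_s-U_{s-2}=2T_s$, which you verify by the sum-to-product formula; your "alternative check" in the last paragraph is in fact precisely the paper's proof. The two arguments are mirror images resting on the same trigonometric identity, so neither buys much over the other mathematically; yours requires importing the second-kind Chebyshev polynomials and the derivative relation, while the paper's is self-contained but has to produce the antiderivative rather than merely confirm it. One genuine point in your favor: you flag the implicit hypothesis $s\ge 2$, without which the term $T_{s-1}(x)/(s-1)$ is ill-defined; the paper states the lemma without this restriction (and its later use in Theorem 3.2 with ``$s>0$ odd'' would formally admit $s=1$, where the formula breaks down), so making the hypothesis explicit is a worthwhile correction.
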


\begin{proof}
One has
\begin{equation}
\label{eq24}
\begin{split}
\int T_{s}(x)dx &= \int \cos(s \arccos(x))dx  = \int \cos(s v)d\cos v,\qquad v:=\arccos x\\
& = -\int \cos(sv)\sin v dv \\
& = -\frac{1}{2}\int [\sin((s+1)v) - \sin((s-1)v)] dv\\
& = \frac{1}{2}\bigg[\frac{\cos((s+1)v)}{s+1} - \frac{\cos((s-1)v)}{s-1}\bigg] + C\\
& = \frac{1}{2}\bigg[\frac{T_{s+1}(x)}{s+1} - \frac{T_{s-1}(x)}{s-1}\bigg]+C.
\end{split}
\end{equation}
Lemma \ref{lemma1} is proved. 
\end{proof}

From \eqref{eq31} and \eqref{eq23} we need to find $(c_i)_{i=0}^s$ so that
\begin{equation}
\label{eq25}
\prod_{i=0}^s (x-c_i) =\frac{s+1}{2^{s-1}} 
\bigg[ \frac{1}{2}\bigg(\frac{T_{s+1}(x)}{s+1} - \frac{T_{s-1}(x)}{s-1}\bigg) + C\bigg],
\end{equation}
where $C$ is a constant. 
However, it is not clear if there is a constant $C$ so that there exists $(c_i)_{i=0}^s$, $c_i\in [-1,1]$, satisfying equation \eqref{eq25}. 

Consider the case when $s$ is odd. 
Let
\begin{equation}
\label{eq26}
P_{s+1}(x):= \frac{s+1}{2^{s-1}} 
\bigg[ \frac{1}{2}\bigg(\frac{T_{s+1}(x)}{s+1} - \frac{T_{s-1}(x)}{s-1}\bigg) + \frac{1}{s^2-1}\bigg].
\end{equation}
We have the following result:
\begin{theorem}
\label{thm3.2xx}
Let $s>0$ be an odd integer.  
The polynomial $P_{s+1}(x)$ defined in \eqref{eq26} has $s+1$ distinct zeros $(c_i)_{i=0}^s$ on the interval $[-1,1]$, $-1=c_0<c_1<...<c_s=1$. These zeros are symmetric about 0. 
\end{theorem}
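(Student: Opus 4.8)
The plan is to reduce everything to the derivative of $P_{s+1}$, which by Lemma~\ref{lemma1} is elementary. Differentiating \eqref{eq26} and using \eqref{eq23} gives $P_{s+1}'(x)=\frac{s+1}{2^{s-1}}T_s(x)$, so the critical points of $P_{s+1}$ are exactly the $s$ zeros of $T_s$, all simple and lying in $(-1,1)$; call them $\xi_1,\dots,\xi_s$. On each of the $s+1$ subintervals into which these points partition $[-1,1]$ the sign of $T_s$ is constant, so $P_{s+1}$ is strictly monotone there and vanishes at most once. Since $T_{s+1}$ has degree $s+1$ with positive leading coefficient while the remaining terms have lower degree, $P_{s+1}$ has degree exactly $s+1$ (in fact it is monic) and so has at most $s+1$ zeros. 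The whole problem is therefore a matter of locating zeros at and between these monotonicity intervals.

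First I would dispatch the boundary and symmetry claims. Using $T_n(1)=1$ one finds $\frac{T_{s+1}(1)}{s+1}-\frac{T_{s-1}(1)}{s-1}=\frac1{s+1}-\frac1{s-1}=-\frac{2}{s^2-1}$, so the constant $\frac1{s^2-1}$ in \eqref{eq26} is exactly what is needed to force $P_{s+1}(1)=0$; for $x=-1$ the identity $T_n(-1)=(-1)^n$ together with $s$ odd (so $s\pm1$ even) produces the same cancellation and $P_{s+1}(-1)=0$. Likewise $T_n(-x)=(-1)^nT_n(x)$ with $s+1$ and $s-1$ even shows that $P_{s+1}$ is an even function, so its zero set is symmetric about $0$; this yields the claimed symmetry once distinctness and location are established.

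The crux is to show the critical values alternate in sign and never vanish. At a zero $\xi$ of $T_s$ the recurrence $T_{s+1}=2xT_s-T_{s-1}$ gives $T_{s+1}(\xi)=-T_{s-1}(\xi)$, so a short computation reduces \eqref{eq26} to $P_{s+1}(\xi_k)=\frac{s+1}{2^{s-1}(s^2-1)}\big(1-sT_{s-1}(\xi_k)\big)$. Writing $\xi_k=\cos\theta_k$ with $\theta_k=\frac{(2k-1)\pi}{2s}$ and using $\cos(s\theta_k)=0$, one evaluates $T_{s-1}(\xi_k)=\cos((s-1)\theta_k)=(-1)^{k-1}\sin\theta_k$. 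The sign of $P_{s+1}(\xi_k)$ is thus the sign of $1-s(-1)^{k-1}\sin\theta_k$. The delicate point is the near-endpoint indices, where $\sin\theta_k$ is smallest, namely $\sin\theta_1=\sin\theta_s=\sin\frac{\pi}{2s}$: here I would invoke Jordan's inequality $\frac{\sin u}{u}>\frac2\pi$ on $(0,\tfrac\pi2)$ to get $s\sin\frac{\pi}{2s}=\frac\pi2\cdot\frac{\sin(\pi/2s)}{\pi/2s}>1$, whence $s\sin\theta_k\ge s\sin\frac{\pi}{2s}>1$ for every $k$. Consequently $1-sT_{s-1}(\xi_k)<0$ for odd $k$ and $>0$ for even $k$: the critical values are nonzero and strictly alternate in sign. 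This estimate is the main obstacle; everything else is bookkeeping.

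Finally I would assemble the count. Because $P_{s+1}$ is strictly monotone between consecutive critical points and the critical values alternate in sign, the intermediate value theorem places exactly one zero strictly inside each of the $s-1$ interior gaps; these zeros are distinct and lie in $(-1,1)$. On the two outer intervals $[-1,\xi_{\min}]$ and $[\xi_{\max},1]$, monotonicity together with the nonvanishing of the adjacent critical value forces the only zeros to be the endpoints $\pm1$ already found. This produces $s+1$ distinct zeros in $[-1,1]$, which saturates the degree, so these are all the zeros and each is simple. Ordering them gives $-1=c_0<c_1<\dots<c_s=1$, and the evenness of $P_{s+1}$ makes them symmetric about $0$, completing the proof. (Note $s\ge3$ is implicit, since the expressions in \eqref{eq26} require $s\ne1$.)
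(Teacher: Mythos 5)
Your argument is correct and follows essentially the same route as the paper: differentiate to get $P_{s+1}'=\frac{s+1}{2^{s-1}}T_s$, check $P_{s+1}(\pm1)=0$ and evenness, use the recurrence $T_{s+1}+T_{s-1}=2xT_s$ to evaluate $P_{s+1}$ at the zeros of $T_s$, and show the critical values alternate in sign via the Jordan-type bound $s\sin\frac{\pi}{2s}>1$ before counting zeros by the intermediate value theorem. Your write-up is slightly more explicit than the paper's about strict monotonicity between critical points, the degree-saturation count, and the implicit restriction $s\ge 3$, but the substance is identical.
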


\begin{proof}
When $s$ is odd, the polynomials $T_{s+1}(x)$ and $T_{s-1}(x)$ are even functions on $[-1,1]$. Thus, $P_{s+1}(x)$ is an even function and its zeros are symmetric about 0. 

Since $s+1$ and $s-1$ are even, one has $T_{s+1}(\pm 1) = T_{s-1}(\pm 1) = 1$. Thus, 
\begin{equation}
\label{eq36}
P_{s+1}(1) = P_{s+1}(-1) = 0.
\end{equation}

From \eqref{eq23} and \eqref{eq26} one gets $P'_{s+1}(x) = \frac{s+1}{2^{s-1}}T_s(x)$. 
Thus, $P'_{s+1}(x)$ and $T_s(x)$ share the same zeros which are $(x_i)_{i=0}^{s-1}$, $x_i=\cos(\frac{2i + 1}{2s})$. 
Since $T_{s+1}(x) + T_{s-1}(x) = 2xT_s(x)$ and $(x_i)_{i=0}^{s-1}$ are zeros of $T_s(x)$, one gets $T_{s+1}(x_i) + T_{s-1}(x_i) = 2x_iT_s(x_i)=0$. Thus, $T_{s+1}(x_i) = - T_{s-1}(x_i)$, $i=0,...,s-1$, and from \eqref{eq26} one gets
\begin{equation}
\label{eq33}
\begin{split}
P_{s+1}(x_i) &= \frac{s+1}{2^{s-1}} 
\bigg[ \frac{1}{2}\bigg(\frac{T_{s+1}(x_i)}{s+1} - \frac{T_{s-1}(x_i)}{s-1}\bigg) + \frac{1}{s^2-1}\bigg]\\
&= \frac{s+1}{2^{s-1}} 
\bigg[ \frac{T_{s+1}(x_i)}{2}\bigg(\frac{1}{s+1} + \frac{1}{s-1}\bigg) + \frac{1}{s^2-1}\bigg]\\
& = \frac{s+1}{2^{s-1}(s^2 - 1)} 
(sT_{s+1}(x_i)+1),\qquad i=0,...,s-1.
\end{split}
\end{equation}
From the relation $\arccos(x_i) = \frac{2i+1}{2s}\pi$, one gets
\begin{equation}
\label{eq34}
\begin{split}
T_{s+1}(x_i) &= \cos((s+1)\arccos(x_i)) \\
&= \cos\bigg(\frac{2i+1}{2}\pi + \frac{2i+1}{2s}\pi\bigg) = (-1)^{i+1}\sin\bigg(\frac{2i+1}{2s}\pi\bigg). 
\end{split}
\end{equation}
Note that
$$
x - \sin(\frac{\pi x}{2}) < 0,\quad \forall x\in(0,1).
$$
Thus,
\begin{equation}
\label{eq35}
\sin\bigg(\frac{2i+1}{2s}\pi\bigg) \ge \sin\bigg(\frac{\pi}{2s}\bigg) > \frac{1}{s},\qquad i=0,...,s-1,\quad s>1. 
\end{equation}
From \eqref{eq33}--\eqref{eq35} one obtains
\begin{equation}
\label{eq37}
\begin{split}
(-1)^{i+1} P_{s+1}(x_i) &= (-1)^{i+1} \frac{s}{(s-1)2^{s-1}}\bigg(T_{s+1}(x_i) + \frac{1}{s}\bigg)\\
&= \frac{s}{(s-1)2^{s-1}}\bigg( \sin\bigg(\frac{2i+1}{2s}\pi\bigg) + \frac{(-1)^{i+1}}{s}\bigg)\\
&> \frac{s}{(s-1)2^{s-1}}\bigg(\frac{1}{s} + \frac{(-1)^{i+1}}{s}\bigg) \ge 0,\qquad i=0,...,s-1.
\end{split}
\end{equation}
Thus, $P_{s+1}(x)$ has at least $s-1$ zeros on the interval $[x_0,x_{s-1}]\subset (-1,1)$. 
This and \eqref{eq36} imply that $P_{s+1}(x)$ has $s+1$ zeros on the interval $[-1,1]$.

Theorem \ref{thm3.2xx} is proved. 
\end{proof}

Consider the case when $s$ is an even integer. 
\begin{theorem}
\label{theorem3}
Let $0<s$ be an even integer. 
For any constant $C$ the polynomial 
\begin{equation}
\label{eq25.x}
g_{s+1}(x) = \frac{s+1}{2^{s-1}} 
\bigg[ \frac{1}{2}\bigg(\frac{T_{s+1}(x)}{s+1} - \frac{T_{s-1}(x)}{s-1}\bigg) + C\bigg],
\end{equation} 
has at most $s$ zeros on the interval $[-1,1]$. 
\end{theorem}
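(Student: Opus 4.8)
The plan is to argue by contradiction, exploiting two facts that together force the even case to behave differently from the odd case: that $g_{s+1}$ is \emph{monic} of degree $s+1$ for every $C$, and that $s+1$ is \emph{odd} when $s$ is even. First I would record the leading coefficient. The top-degree term of $g_{s+1}$ comes solely from the $T_{s+1}$ summand, and since $T_{s+1}$ has leading coefficient $2^{s}$ while the prefactor multiplying $T_{s+1}$ in \eqref{eq25.x} is $\frac{s+1}{2^{s-1}}\cdot\frac12\cdot\frac{1}{s+1}=\frac{1}{2^{s}}$, the product is $1$; thus $g_{s+1}$ is monic regardless of $C$. Next I would evaluate the endpoints using $T_n(1)=1$ and $T_n(-1)=(-1)^n$. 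Because $s\pm1$ are odd, $T_{s+1}(-1)=T_{s-1}(-1)=-1$, and a short computation gives
\begin{equation*}
g_{s+1}(1)=\frac{s+1}{2^{s-1}}\Big(C-\frac{1}{s^2-1}\Big),\qquad g_{s+1}(-1)=\frac{s+1}{2^{s-1}}\Big(C+\frac{1}{s^2-1}\Big).
\end{equation*}

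Then I would suppose, for contradiction, that $g_{s+1}$ has $s+1$ zeros in $[-1,1]$ counted with multiplicity. Since $g_{s+1}$ is monic of degree exactly $s+1$, these would be all of its roots, so $g_{s+1}(x)=\prod_{i=0}^{s}(x-r_i)$ with every $r_i\in[-1,1]$. Evaluating at the endpoints, each factor $1-r_i\ge 0$ forces $g_{s+1}(1)\ge 0$, while each factor $-1-r_i\le 0$ together with the \emph{odd} number $s+1$ of such factors forces $g_{s+1}(-1)\le 0$. Comparing these inequalities with the closed-form values above yields simultaneously $C\ge \frac{1}{s^2-1}$ and $C\le -\frac{1}{s^2-1}$, which is impossible since $s\ge 2$ gives $\frac{1}{s^2-1}>0$. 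Hence $g_{s+1}$ cannot have $s+1$ zeros in $[-1,1]$, so it has at most $s$ there, which is the claim.

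I expect the one genuinely delicate point — and the place where the hypothesis that $s$ is even is indispensable — to be the sign bookkeeping at $x=-1$. It is precisely the parity of the number of factors that produces the contradiction: when $s$ is odd, $s+1$ is even, the product $\prod(-1-r_i)$ is nonnegative, and $g_{s+1}(1),g_{s+1}(-1)$ may share a sign, so no contradiction arises, in agreement with Theorem \ref{thm3.2xx}, where all $s+1$ zeros do lie in $[-1,1]$. I would therefore state this parity step explicitly rather than leaving it implicit. As a sanity check I would run the argument for $s=2$, where $g_3(x)=x^3-\tfrac32 x+\tfrac32 C$: three roots in $[-1,1]$ would force $\sum r_i=0$ and hence $\sum r_i^2=3$, so $|r_i|=1$ for all $i$, which is incompatible with $\sum r_i=0$ for three values in $\{-1,1\}$ — the same obstruction seen through symmetric functions.
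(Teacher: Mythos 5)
Your proof is correct, and it takes a genuinely different route from the paper's. The paper works through the derivative: by Lemma \ref{lemma1}, $g_{s+1}'(x)=\frac{s+1}{2^{s-1}}T_s(x)$, so $g_{s+1}$ is monotone between consecutive zeros $x_i=\cos\big(\frac{2i+1}{2s}\pi\big)$ of $T_s$ and hence has at most $s-1$ zeros in the inner region; it then shows that $\min g_{s+1}$ on the outer interval adjacent to $-1$ strictly exceeds $\max g_{s+1}$ on the outer interval adjacent to $1$, so those two intervals together contribute at most one more zero, for a total of at most $s$. You instead avoid the derivative entirely: you use that $g_{s+1}$ is monic of degree $s+1$, compute $g_{s+1}(\pm 1)$ explicitly, and observe that if all $s+1$ roots lay in $[-1,1]$ then $g_{s+1}(1)=\prod(1-r_i)\ge 0$ while $g_{s+1}(-1)=\prod(-1-r_i)\le 0$ because the number of nonpositive factors, $s+1$, is odd — yielding the incompatible constraints $C\ge\frac{1}{s^2-1}$ and $C\le-\frac{1}{s^2-1}$. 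Your computations of the leading coefficient and of the endpoint values are correct, and the reduction of ``$s+1$ zeros in $[-1,1]$'' to ``all roots lie in $[-1,1]$'' is legitimate since the degree is exactly $s+1$. Your approach is more elementary (no Rolle-type interval counting) and isolates exactly where the parity of $s$ enters, which is a genuine gain in transparency; the paper's approach, on the other hand, yields structural information about where the at most $s$ zeros can sit (one per monotonicity interval), which is what it exploits in the neighboring constructions of Theorems \ref{thm3.2xx} and \ref{thm3.4}. Either proof establishes the theorem.
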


\begin{proof}
From Lemma \ref{lemma1} and \eqref{eq25.x} one gets $g'_{s+1}(x) = \frac{s+1}{2^{s-1}}T_s(x)$. Thus, zeros of $g'_{s+1}(x)$ are zeros of $T_s(x)$ which are $(x_i)_{i=0}^{s-1}$, $x_i = \cos(\frac{2i+1}{2s}\pi)$, $i=0,...,s-1$.

Since $g'_{s+1}(x)$ does not change sign on intervals $[x_i,x_{i+1}]$, $i=0,...,s-2$, the function $g_{s+1}(x)$ 
has at most $s-1$ zeros on $[x_0,x_{s-1}]$ for any given $C$. 

One has
\begin{equation}
g'_{s+1}(x) = \frac{s+1}{2^{s-1}}T_s(x) \ge 0,\qquad -x\in [-1,x_0]\cup [x_{s-1},1]. 
\end{equation}
Thus
\begin{equation}
\min_{[-1,x_0]} g_{s+1}(x) \ge \frac{1}{s^2-1} + C >  -\frac{1}{s^2-1} + C\ge \max_{x\in [x_{s-1},1]}g_{s+1}(x).
\end{equation}
Thus, for any given $C$ there exists at most one zero of $g_{s+1}(x)$ on $[-1,x_0]\cup [x_{s-1},1]$. 
Therefore, the function $g_{s+1}(x)$ has at most $s$ zeros on the interval $[-1,1]$. 
\end{proof}

\begin{remark}
Theorem \ref{theorem3} says that 
when $s$ is even, there does not exist a constant $C$ and $(c_i)_{i=0}^s$, $c_i\in [-1,1]$, so that equation  \eqref{eq25} holds. Thus, when $s$ is even, there does not exist a node distribution $(c_i)_{i=0}^s$, $-1=c_0<c_1<...<c_s=1$, so that equation \eqref{eq22} holds. 
\end{remark}

Let us propose two possible node distributions $(c_i)_{i=0}^s$ for computing differentiation matrices when $s$ is even. 
Let
\begin{equation}
\label{eq41}
\tilde{Q}_{s+1}(x) := \frac{s+1}{2^{s}} 
\bigg(\frac{T_{s+1}(x)}{s+1} - \frac{T_{s-1}(x)}{s-1} + \frac{2x}{s^2-1}\bigg).
\end{equation}
Then $\tilde{Q}_{s+1}(x)$ has $s+1$ zeros $(c_i)_{i=0}^s$ on the interval $[-1,1]$, $-1=c_0<c_1<...<c_s=1$. 
Thus, one can use this node distribution for the computation of differential matrices. 

\begin{theorem}
\label{thm3.4}
Let $s$ be an even integer. 
The polynomial $\tilde{Q}_{s+1}(x)$ has $s+1$ zeros $(c_i)_{i=0}^{s}$ on the interval $[-1,1]$, $-1=c_0<c_1<...<c_s = 1$. 
\end{theorem}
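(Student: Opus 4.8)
The plan is to follow the strategy used in the proof of Theorem \ref{thm3.2xx}, adapting it to the opposite parity of $s$ and to the linear correction term $\frac{2x}{s^2-1}$ appearing in \eqref{eq41}. First I would dispose of the endpoints. Since $T_n(\pm1)=1$, a one-line computation gives $\frac{1}{s+1}-\frac{1}{s-1}=-\frac{2}{s^2-1}$, so the bracket in \eqref{eq41} vanishes at $x=1$ and hence $\tilde{Q}_{s+1}(1)=0$; this is exactly the cancellation for which the term $\frac{2x}{s^2-1}$ was engineered. Moreover, when $s$ is even the three summands $T_{s+1}$, $T_{s-1}$, and $2x$ are all odd functions, so $\tilde{Q}_{s+1}$ is an odd polynomial and therefore $\tilde{Q}_{s+1}(-1)=0$ as well. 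This already forces $c_0=-1$ and $c_s=1$.

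The core step is to test $\tilde{Q}_{s+1}$ at the zeros $x_i=\cos\big(\frac{2i+1}{2s}\pi\big)$, $i=0,\dots,s-1$, of $T_s$, precisely as in \eqref{eq33}. Using the identity $T_{s+1}+T_{s-1}=2xT_s$ one gets $T_{s+1}(x_i)=-T_{s-1}(x_i)$, which collapses the difference of the two Chebyshev terms and, after combining with $\frac{2x_i}{s^2-1}$, yields $\tilde{Q}_{s+1}(x_i)=\frac{2(s+1)}{2^{s}(s^2-1)}\big(sT_{s+1}(x_i)+x_i\big)$. The prefactor is positive, so the sign of $\tilde{Q}_{s+1}(x_i)$ is that of $sT_{s+1}(x_i)+x_i$. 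Substituting \eqref{eq34}, namely $T_{s+1}(x_i)=(-1)^{i+1}\sin\big(\frac{2i+1}{2s}\pi\big)$, I would then form the quantity $(-1)^{i+1}\big(sT_{s+1}(x_i)+x_i\big)=s\sin\big(\frac{2i+1}{2s}\pi\big)+(-1)^{i+1}\cos\big(\frac{2i+1}{2s}\pi\big)$.

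This is where the estimate \eqref{eq35} does the work, and it is the only genuine difference from Theorem \ref{thm3.2xx}: in the odd case the correction contributed a clean $+1$, whereas here it contributes $(-1)^{i+1}x_i=(-1)^{i+1}\cos\big(\frac{2i+1}{2s}\pi\big)$, whose absolute value is at most $1$. Since \eqref{eq35} gives $s\sin\big(\frac{2i+1}{2s}\pi\big)>1\ge|x_i|$, the displayed quantity is strictly positive, so $(-1)^{i+1}\tilde{Q}_{s+1}(x_i)>0$ for every $i$. Hence $\tilde{Q}_{s+1}$ takes strictly alternating signs at the $s$ interior points $x_0>x_1>\cdots>x_{s-1}$, and the intermediate value theorem produces at least $s-1$ zeros in $(x_{s-1},x_0)\subset(-1,1)$.

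Finally I would close the count. Adding the endpoint zeros at $\pm1$, which lie outside $(x_{s-1},x_0)$ and are therefore distinct from the interior ones, $\tilde{Q}_{s+1}$ has at least $s+1$ zeros in $[-1,1]$. Because the leading term of $\frac{T_{s+1}}{s+1}$ makes $\tilde{Q}_{s+1}$ monic of degree $s+1$, it has at most $s+1$ zeros, so it has exactly $s+1$ simple, distinct zeros $-1=c_0<c_1<\cdots<c_s=1$, as claimed. I expect the main obstacle to be essentially bookkeeping rather than conceptual: one must verify that the single term $\frac{2x}{s^2-1}$ simultaneously forces $\tilde{Q}_{s+1}(\pm1)=0$ and leaves the sign estimate intact, the bound $|x_i|\le1$ being exactly tight enough against $s\sin\big(\frac{2i+1}{2s}\pi\big)>1$; once this is checked, the remainder of the argument transfers verbatim from the odd case.
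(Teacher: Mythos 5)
Your proposal is correct and follows essentially the same route as the paper's proof: verify $\tilde{Q}_{s+1}(\pm1)=0$, evaluate at the zeros $x_i$ of $T_s$ using $T_{s+1}(x_i)=-T_{s-1}(x_i)$ and the bound \eqref{eq35} to get alternating signs, and then count $s-1$ interior zeros plus the two endpoints against the degree $s+1$. The only (harmless) additions are your explicit remark that $\tilde{Q}_{s+1}$ is odd and the final observation that the zeros are simple.
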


\begin{proof}
When $s$ is even, we have $T_{s+1}(\pm 1)=\pm 1$, and $T_{s-1}(\pm 1)=\pm 1$. 
Thus, one gets
\begin{equation}
\label{eq38}
\tilde{Q}_s(-1) = \tilde{Q}_s(1) = 0. 
\end{equation}

Let $x_i=\cos(\frac{2i+1}{2s}\pi)$, $i=0,...,s-1$. Then $(x_i)_{i=0}^{s-1}$ are zeros of $T_{s}(x)$. 
By similar arguments as in Theorem \ref{thm3.2xx} (cf. \eqref{eq37}) one gets
\begin{equation}
\begin{split}
(-1)^{i+1} \tilde{Q}_{s+1}(x_i) &= (-1)^{i+1} \frac{s}{(s-1)2^{s-1}}\bigg(T_{s+1}(x_i) + \frac{x_i}{s}\bigg)\\
&= \frac{s}{(s-1)2^{s-1}}\bigg( \sin\bigg(\frac{2i+1}{2s}\pi\bigg) + \frac{(-1)^{i+1}x_i}{s}\bigg)\\ 
&> \frac{s}{(s-1)2^{s-1}}\bigg( \frac{1}{s} - \frac{|x_i|}{s}\bigg) > 0,\qquad i=0,...,s-1.
\end{split}
\end{equation}
Thus, $\tilde{Q}_{s+1}(x)$ has at least $s-1$ zeros on the interval $[x_0,x_{s-1}]$. Taking into account \eqref{eq38}, one concludes that $\tilde{Q}_{s+1}(x)$ has $s+1$ zeros on the interval $[-1,1]$.

Theorem \ref{thm3.4} is proved. 
\end{proof}

If $(c_i)_{i=0}^s$ are chosen as zeros of $\tilde{Q}_{s+1}(x)$, then we have
\begin{equation}
\frac{d}{dx} \prod_{i=0}^s(x-c_i) = \frac{s+1}{2^{s-1}}\bigg(T_s(x) + \frac{1}{(s-1)(s+1)}\bigg). 
\end{equation}
Thus, for this choice of $(c_i)_{i=0}^s$ equation \eqref{eq22} is not satisfied. 

Let us discuss another possible choice for $(c_i)_{i=0}^s$ when $s$ is even. 
Consider the following polynomial
\begin{equation}
Q_{s+1}(x) := \frac{s+1}{2^{s}} 
\bigg(\frac{T_{s+1}(x)}{s+1} - \frac{T_{s-1}(x)}{s-1}\bigg).
\end{equation}
The functions $T_{s-1}(x)$ and $T_{s+1}(x)$ are odd functions when $s$ is an odd integer. 
Thus, $Q_{s+1}(s)$ is an odd function when $s$ is even. 
Therefore, zeros of $Q_{s+1}(s)$ are symmetric about $0$.  
By similar arguments as in Theorem ~\ref{thm3.2xx} one can show that $Q_{s+1}(s)$ has $s+1$ zeros. Note that not all these $s+1$ zeros are in $[-1,1]$.  
Let $d_0<d_1<...<d_s$ be zeros of $Q_{s+1}(s)$ and let $c_i := \frac{d_i}{d_s}$. Then
\begin{equation}
\label{eq44}
-1=c_0<c_1<...<c_s = 1,\quad c_i = \frac{d_i}{d_s},\qquad i=0,...,s. 
\end{equation}
Note that if $(c_i)_{i=0}^s$ are chosen by \eqref{eq44}, then equation \eqref{eq22} does not hold. 
In fact, for this choice of $(c_i)_{i=0}^s$ one has
\begin{equation}
\frac{d}{dx} \prod_{i=0}^s (x-c_i) = \bigg(\frac{1}{d_s}\bigg)^{s+1} \frac{s+1}{2^{s-1}} T_{s}(d_s x). 
\end{equation}

\section{Numerical experiments}
\label{sec4}

\subsection{Interpolation}

In this section we will carry out numerical experiments to compare the Lebesgue constants of the following node distributions: 

1. Chebyshev-Gauss-Lobatto points
\begin{equation}
c_i = \cos(\frac{i}{s}\pi),\qquad i=0,...,s.
\end{equation}

2. Scaled Chebyshev points
\begin{equation}
c_i = \frac{\cos(\frac{2i+1}{2s+2}\pi)}{\cos(\frac{1}{2s+2}\pi)},\qquad i=0,...,s.
\end{equation}

3. Equidistant nodes
\begin{equation}
c_i = - 1 + \frac{2i}{s},\qquad i=0,...,s.
\end{equation}

The Lebesgue constant $\Lambda(\bm{c})$ can be computed by the formula (see, e.g., \cite{FB})
\begin{equation}
1 + \Lambda(\bm{c}) = \max_{x\in [-1,1]} F_{\bm{c}}(x),\qquad 
F_{\bm{c}}(x):= \sum_{k=0}^s |F_{k}(x)|,
\end{equation}
where 
\begin{equation}
F_{k}(x) = \prod_{j=0 \atop{j\not=k}}^s (x-c_j)\bigg/ \prod_{j=0\atop{j\not=k}}^s (c_k-c_j),\qquad k=0,...,s.
\end{equation}

In all experiments, we denote by CGL the numerical solutions obtained by using Chebyshev-Gauss-Lobatto node distribution. 

Figure \ref{fig1} plots the function $F_{\bm{c}}(x)$ based on equidistant nodes, Chebyshev-Gauss-Lobatto nodes, and the extended Chebyshev nodes studied in this paper. From Figure \ref{fig1} one can see that 
the scaled Chebyshev node distribution yields a function $F_{\bm{c}}(x)$ with minimal sup-norm among the three node distributions.  

\begin{figure}[!h!t!b!]
\centerline{
\begin{tabular}{c}ets
\mbox{\includegraphics[scale=0.74]{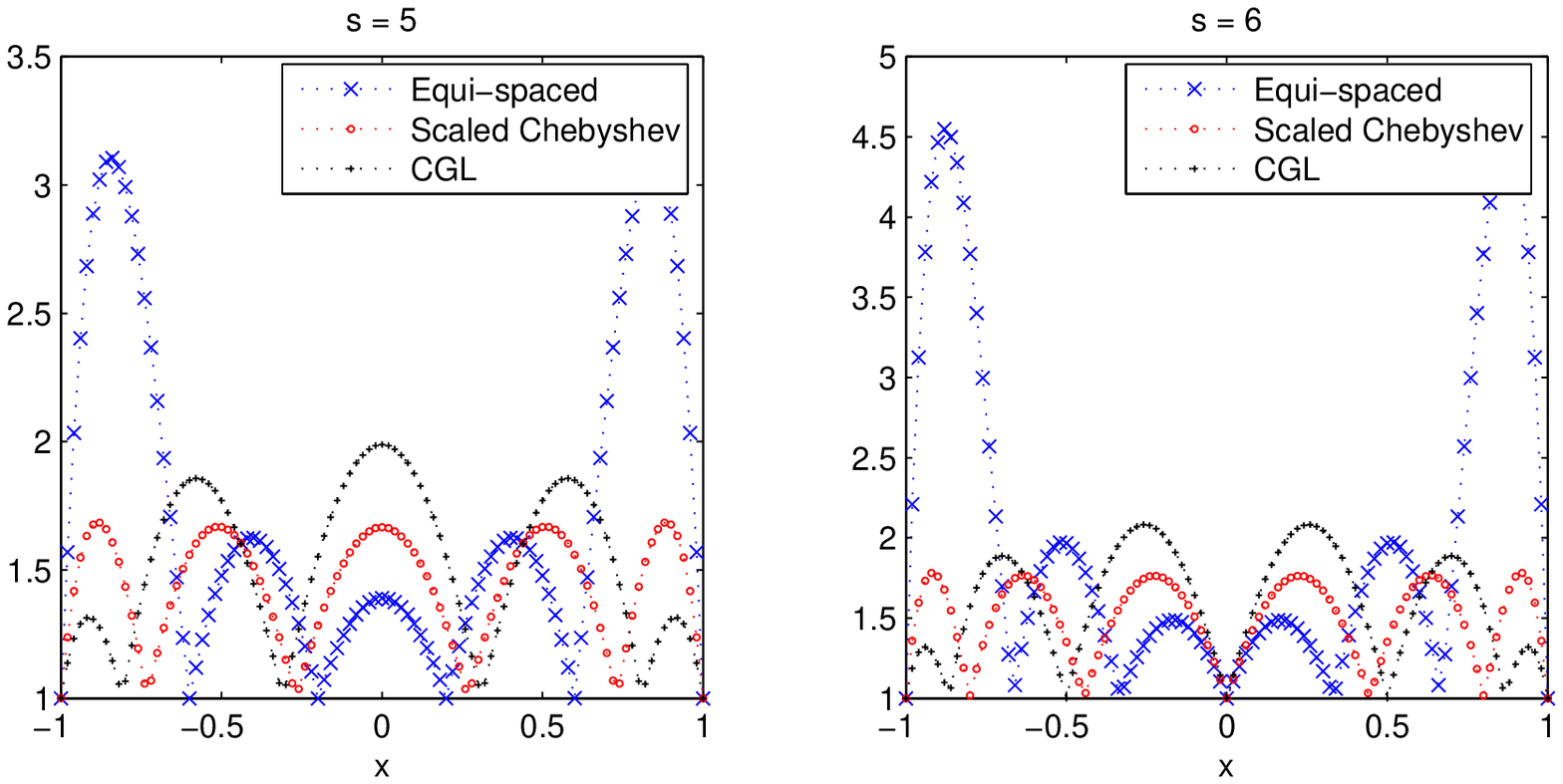}}
\end{tabular}
}
\caption{\it Plots of $F_{\bm{c}}(x)$ when $s=4$ and $s=6$. 
}
\label{fig1}
\end{figure}

Table \ref{table1} below presents Lebesgue constants for the three node distributions for various $s$. 
From Table \ref{table1} one concludes that the scaled Chebyshev nodes yield the smallest Lebesgue constants among the three node distributions. One can also see that the Lebesgue constant $\Lambda(n)$ of equidistant node distribution increases very fast when $n$ increases. 

\begin{table}[ht]
\centering
\small
\begin{tabular}{|c@{\hspace{2mm}}
@{\hspace{2mm}}|c@{\hspace{2mm}}c@{\hspace{2mm}}c@{\hspace{2mm}}c@{\hspace{2mm}}c@{\hspace{2mm}}c@{\hspace{2mm}}c@{\hspace{2mm}}|}
\hline 
      Node distribution & $s= 6$	& $s= 8$	&$s= 10$	&$n=12$	   &$n=14$   &$n=16$ &$n=18$\\
\hline
Equi-spaced             &3.6   &9.9   &28.9   &88.3  &282.2 &933.5 &3170.1\\
\hline
Chebyshev-Gauss-Lobatto &1.1   &1.3   &1.4    &1.5   &1.6   &1.7   &1.8\\
\hline
Scaled Chebyshev        &0.8   &0.9   &1.1    &1.2   &1.3   &1.3   &1.4\\
\hline
\end{tabular}
\caption{Lebesgue constants}
\label{table1}
\end{table}

Figure \ref{fig2} plots absolute values of errors of Lagrange interpolation $|f(x) - L_{\bm{c}}(f)(x)|$ for  equidistant nodes, Chebyshev-Gauss-Lobatto nodes, and the scaled Chebyshev nodes when $f(x)=e^{x}$ (left) and $f(x) = \cos(x)$ (right). From Figure \ref{fig2} one concludes that the scaled Chebyshev node distribution is the best among the three node distributions in this experiment. 

\begin{figure}[!h!t!b!]
\centerline{
\begin{tabular}{c}
\mbox{\includegraphics[scale=0.74]{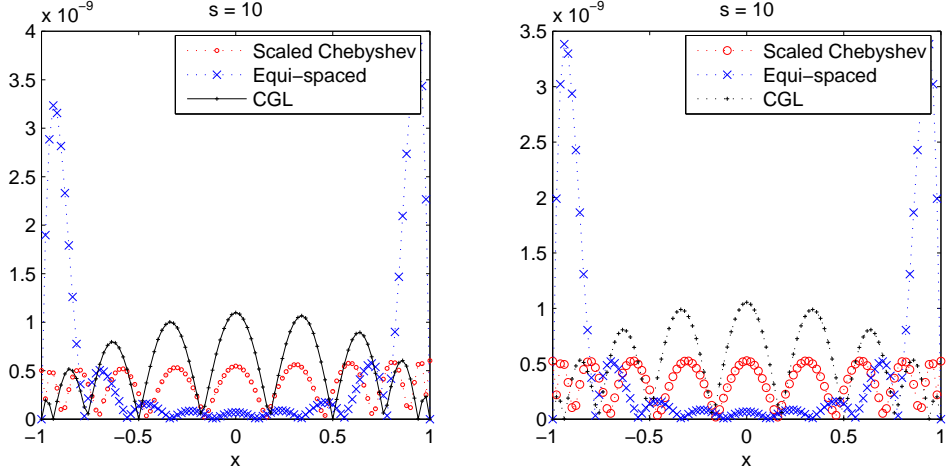}}
\end{tabular}
}
\caption{\it Plots of interpolating errors for $f(x)=e^{x}$ (left) and $f(x) = \cos(x)$ (right). 
}
\label{fig2}
\end{figure}

\subsection{Numerical differentiation}
\label{}

The Lagrange interpolation polynomial $L_{\bm{c}}(f)$ of $f$ over the nodes $(c_i)_{i=0}^s$ is given by
\begin{equation}
\label{eq39}
L_{\bm{c}}(f)(x) = \sum_{i=0}^s f(c_i)\ell_{\bm{c},i}(x),\quad \ell_{\bm{c},i}(x) = \prod_{j=0\atop{j\not=i}}^s (x-c_j)/ \prod_{j=0\atop{j\not=i}}^s (c_i-c_j).
\end{equation}
Therefore,
\begin{equation}
(L_{\bm{c}}(f))'(x) = \sum_{i=0}^s f(c_i)\ell'_{\bm{c},i}(x).
\end{equation}
This implies
\begin{equation}
(L_{\bm{c}}(f))'(c_k) = \sum_{i=0}^s f(c_i)\ell'_{\bm{c},i}(c_k),\qquad k=0,...,s.
\end{equation}
These equations can be rewritten as
\begin{equation}
\label{au01e1}
\begin{pmatrix}
(L_{\bm{c}}(f))'(c_0)\\
(L_{\bm{c}}(f))'(c_1)\\
\vdots \\
(L_{\bm{c}}(f))'(c_s)
\end{pmatrix}
 = 
\begin{pmatrix}
d_{00}  &d_{01}  &\cdots  &d_{0s}\\
d_{10}  &d_{11}  &\cdots  &d_{1s}\\
\vdots  &\vdots  &\ddots  &\vdots\\
d_{s0}  &d_{s1}  &\cdots  &d_{ss}
\end{pmatrix}
\begin{pmatrix}
f(c_0)\\
f(c_1)\\
\vdots\\
f(c_s)
\end{pmatrix},
\qquad d_{ij} := \ell'_{\bm{c},j}(c_i).
\end{equation}
The matrix $D=(d_{ij})_{i,j=0}^s$ is called a differentiation matrix. 
The derivatives $f'(c_i)$, $i=0,...,s$, are approximated by $(L_{\bm{c}}(f))'(c_i)$ which are computed by \eqref{au01e1}. 

Let us derive formulae for computing the differentiation matrix $D=(d_{ij})_{i,j=0}^s$. 
From \eqref{eq39}, one gets
\begin{equation}
\ell'_{\bm{c},i}(x) = \ell_{\bm{c},i}(x) \sum_{j=0\atop{j\not=i}}^s \frac{1}{x-c_j},\qquad x\not = c_j.
\end{equation}
Thus,
\begin{gather}
d_{ji}=\ell'_{\bm{c},i}(c_j) = \prod_{k=0\atop{k\not=i,j}}^s (c_i-c_k)\bigg/ \prod_{k=0\atop{k\not=j}}^s (c_j-c_k),\quad i\not=j,\quad i,j=0,...,s,\\
d_{ii}=\ell'_{\bm{c},i}(c_i) = \sum_{k=0\atop{k\not=i}}^s (c_i-c_k),\qquad i=0,...,s.
\end{gather}
One can find similar formulae in \cite{Trefethen}. 


When $(c_i)_{i=0}^s$ are Chebyshev-Gauss-Lobatto points, the differentiation matrix $D = (d_{ij})_{i,j=0}^s$ is given by (see, e.g., \cite{Trefethen})
\begin{gather}
d_{00}= \frac{2s^2 + 1}{6},\qquad d_{ss} = \frac{2s^2 + 1}{6},\\
d_{jj} = \frac{-c_j}{2(1-c_j^2)},\qquad j=1,...,s-1,\\
d_{ij} = \frac{a_i}{a_j} \frac{(-1)^{i+j}}{c_i - c_j},\qquad i\not=j,\quad i,j=0,...,s,
\end{gather}
where
\begin{equation}
a_i = \bigg \{\begin{matrix} 2,& i = 0\quad \text{or}\quad i=s,\\
1,& \text{otherwise}.
\end{matrix}
\end{equation}

Let us do some numerical experiments with the computation of the first  derivative of a function $f$ using different types of node distributions. These node distributions are Chebyshev-Gauss-Lobatto points, equi-spaced distribution, the scaled Chebyshev points, and the node distribution developed in Section \ref{sec3}. In our experiments, the node distribution from Theorem \ref{thm3.2xx} is denoted by ND1 and the node distribution from Theorem \ref{thm3.4} is denoted by ND2. 

Figure \ref{figure3} plots the errors $|f'(c_i)- (L_{\bm{c}}(f))'(c_i)|$ for the four node distributions for the function $f(x)=e^{x}$. 
From Figure \ref{figure3} one can see that the node distribution ND1 studied in this paper yields the best results in the sup-norm. The approximation for $(f'(c_i))_{i=0}^s$ with equidistant nodes are very good when $c_i$ is close to $0$ but are not good when $c_i$ is close to the boundary $-1$ or $1$. The accuracy of numerical solutions from all node distributions in this experiment is high even with ten nodes.

\begin{figure}[!h!t!b!]
\centerline{
\begin{tabular}{c}
\mbox{\includegraphics[scale=0.74]{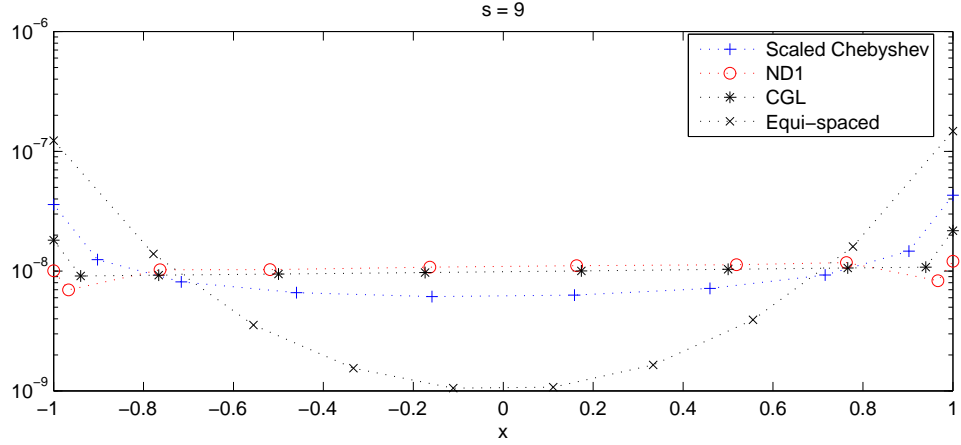}}
\end{tabular}
}
\caption{\it Plots of $|f'(c_i)- (L_{\bm{c}}(f))'(c_i)|$, $i=0,...,s$, for $f(x)=e^{x}$.
}
\label{figure3}
\end{figure}

Figure \ref{figure4} plots the errors $|f'(c_i)- (L_{\bm{c}}(f))'(c_i)|$ for the four node distributions for the function $f(x)=e^{x^2}$. 
From Figure \ref{figure4} one can see that the result obtained from the node distribution ND1 is the best in the sup-norm. 
Again, the numerical approximations to $f'(c_i)$, $i=0,...,s$, with equidistant nodes are very good when $c_i$ is close to $0$ but are not good when $c_i$ is close to the boundary $-1$ or $1$. 
The accuracy of numerical solutions in this experiment is not very high since the function $e^{x^2}$ in this experiment grows much faster than the function $e^x$ in the previous experiment.

\begin{figure}[!h!t!b!]
\centerline{
\begin{tabular}{c}
\mbox{\includegraphics[scale=0.74]{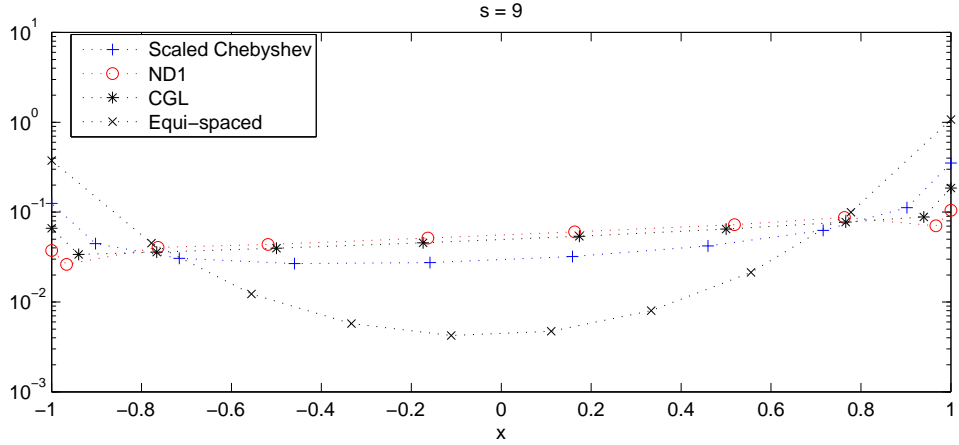}}
\end{tabular}
}
\caption{\it Plots of $|f'(c_i)- (L_{\bm{c}}(f))'(c_i)|$, $i=0,...,s$, for $f(x)=e^{x^2}$.
}
\label{figure4}
\end{figure}

Figures \ref{figure5} and \ref{figure6} plot numerical results for the four node distributions: Chebyshev-Gauss-Lobatto node distribution, the scaled Chebyshev node distribution, the equi-spaced nodes, and the node distribution ND2.

Figure \ref{figure5} plots the numerical errors for computing $f'(c_i)$, $i=0,...,s$, for $f(x) = e^x$, on $[-1,1]$. 
It is clear from Figure \ref{figure5} that the ND2 node distribution yields the best result and the equi-spaced node distribution yields the worst result. From Figure \ref{figure5} we conclude that Chebyshev-Gauss-Lobatto nodes work better than the scaled Chebyshev nodes in this experiment. 

\begin{figure}[!h!t!b!]
\centerline{
\begin{tabular}{c}
\mbox{\includegraphics[scale=0.74]{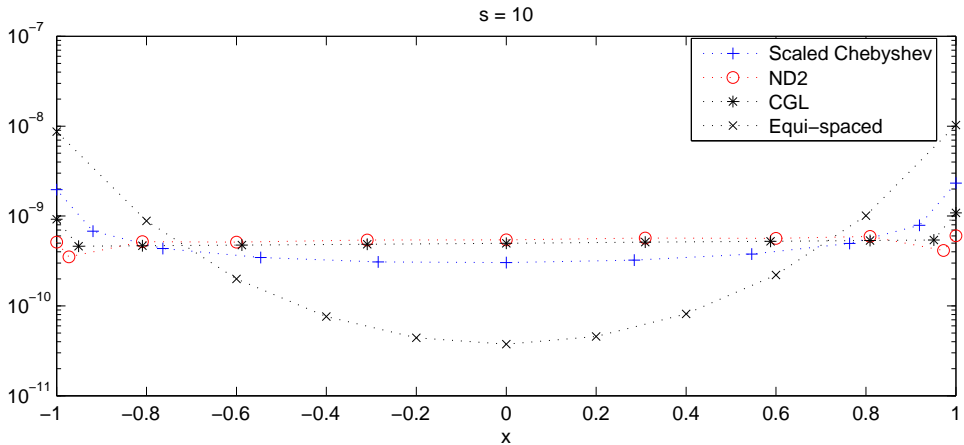}}
\end{tabular}
}
\caption{\it Plots of $|f'(c_i)- (L_{\bm{c}}(f))'(c_i)|$, $i=0,...,s$, for $f(x)=e^{x}$.
}
\label{figure5}
\end{figure}

Figure \ref{figure6} plots the results for $f=e^{x^2}$. Again, it follows from Figure \ref{figure6} that 
the ND2 yields the best numerical result. It is clear from Figure \ref{figure6} that Chebyshev-Gauss-Lobatto nodes work better than the scaled Chebyshev nodes. The equi-spaced node distribution is the worst among these node distributions. 

\begin{figure}[!h!t!b!]
\centerline{
\begin{tabular}{c}
\mbox{\includegraphics[scale=0.74]{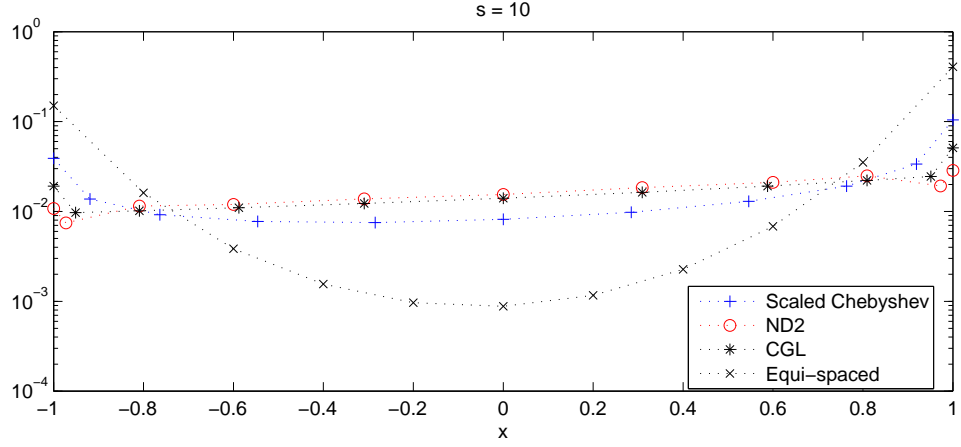}}
\end{tabular}
}
\caption{\it Plots of $|f'(c_i)- (L_{\bm{c}}(f))'(c_i)|$, $i=0,...,s$, for $f(x)=e^{x^2}$.
}
\label{figure6}
\end{figure}

\subsection{Solving a Volterra equation of the first kind}

Let us do a numerical experiment with solving the following equation
\begin{equation}
\label{eqj30}
\int_0^t K(t,\xi)u(\xi) d\xi = f(t),\qquad 0\le t\le 1.
\end{equation}
To solve equation \eqref{eqj30} we approximate $u(\xi)$ by its Lagrange interpolation polynomial $P(\xi)$ over the nodes $(c_i)_{i=0}^s$ and solve for $(u(c_i))_{i=0}^s$ from equation \eqref{eqj30}. 
In particular, we have
\begin{equation}
u(\xi) \approx P(\xi)=\sum_{j=0}^s \ell_j(\xi)u(c_j),\qquad \xi \in [0,1].
\end{equation}
From equation \eqref{eqj30} one gets
\begin{equation}
\label{eqjl25}
\begin{split}
f(c_i) &\approx \int_0^{c_i} K(c_i,\xi)P(\xi)d\xi
 = \sum_{j=0}^s u(c_j) \int_0^{c_i}\ell_j(\xi)K(c_i,\xi) d\xi,\quad i=0,...,s.
\end{split}
\end{equation}
Equation \eqref{eqjl25} can be written as
\begin{equation}
\label{au6e1}
A_su_s \approx f_s,
\end{equation}
where
\begin{gather}
u_s = (u(c_0), u(c_1),...,u(c_s))^T,\quad f_s = (f(c_0), f(c_1),...,f(c_s))^T,\\
A_s = (a_{ij})_{i,j=0}^s,\qquad a_{ij} = \int_0^{c_i} \ell_j(\xi) K(c_i,\xi)d\xi\label{eqjl26.1}.
\end{gather}
Taking into account \eqref{au6e1}, we solve for $\tilde{u}_s$ from the linear algebraic system 
\begin{equation}
A_s\tilde{u}_s = f_s,
\end{equation}
and take $\tilde{u}_s$ as an approximation to $u_s=(u(c_0), u(c_1),...,u(c_s))^T$. 

In our experiments we choose $K(t,\xi) = e^{t-\xi}$ and $u(t)=\cos(\pi t),\, t\in [0,1]$. 
We compare the three distributions: Chebyshev-Gauss-Lobatto points, the scaled Chebyshev points, and the two node distributions developed in Section \ref{sec3}. The elements $a_{ij}$, $i,j=0,...,s$, in 
equation \eqref{eqjl26.1} are computed by means of quadrature formulas. In fact, we used the function {\it quad} in MATLAB to compute these coefficients. 

Figure \ref{figure7} plots the results obtained by using Chebyshev-Gauss-Lobatto points, the scaled Chebyshev points, and the node distribution ND1 developed in Section \ref{sec3} for the case when $s=9$. 
From Figure \ref{figure7} we can see that the node distribution ND1 yields the best result in the sup-norm. 
The scaled Chebyshev node distribution yields the worst result in sup-norm in this experiment. 
\begin{figure}[!h!t!b!]
\centerline{
\begin{tabular}{c}
\mbox{\includegraphics[scale=0.68]{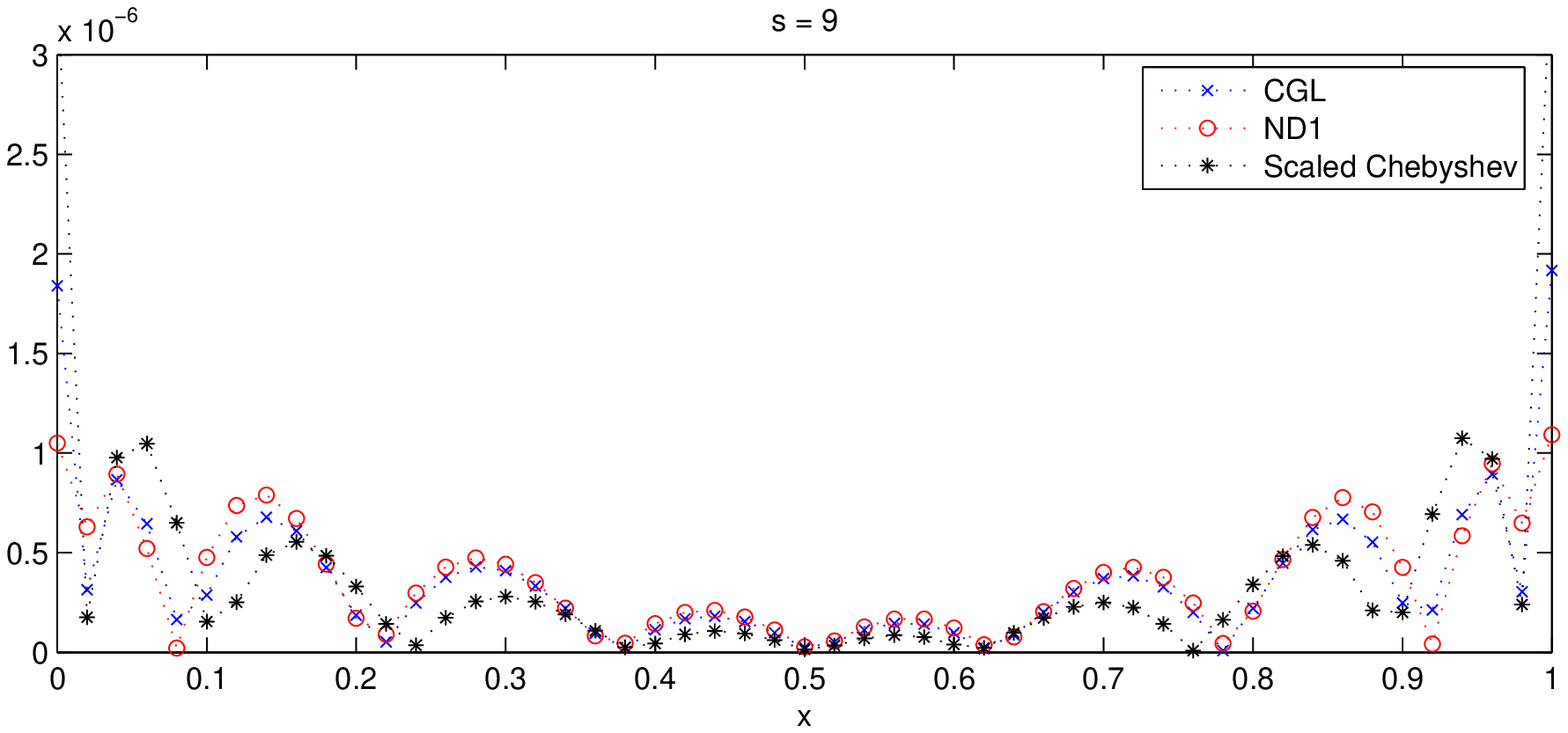}}
\end{tabular}
}
\caption{\it Plots of absolute values of errors for $u(\xi)=\cos(\pi \xi)$.
}
\label{figure7}
\end{figure}

Figure \ref{figure8} plots the results obtained by using Chebyshev-Gauss-Lobatto points, the scaled Chebyshev points, and the node distribution ND2 developed in Section \ref{sec3} for the case when $s=10$.  
We can see from Figure \ref{figure8} that the result obtained by using the node distribution ND2 is the best in the sup-norm. Again, the result obtained by using the scaled Chebyshev node distribution is the worst in sup-norm in this experiment.
\begin{figure}[!h!t!b!]
\centerline{
\begin{tabular}{c}
\mbox{\includegraphics[scale=0.68]{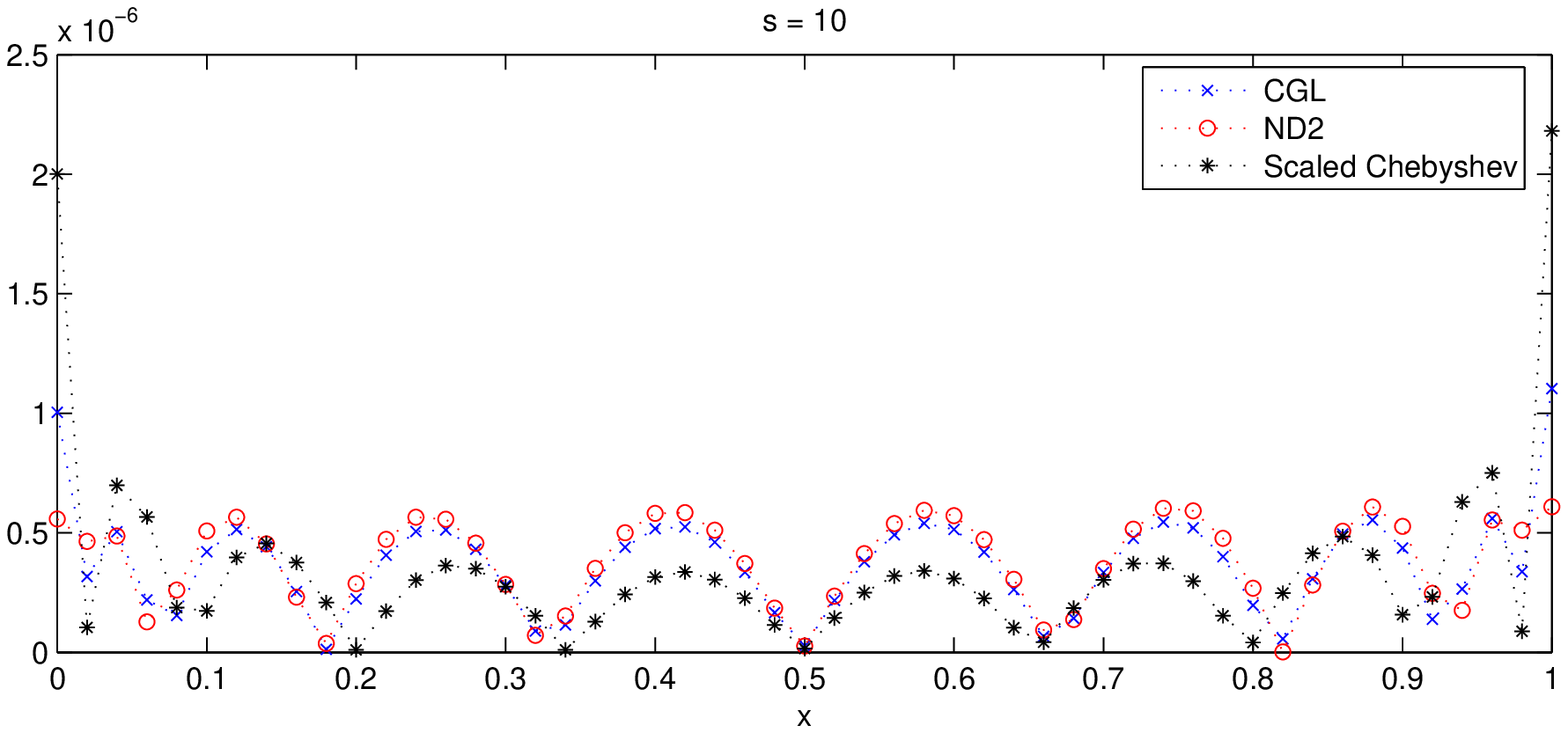}}
\end{tabular}
}
\caption{\it Plots of absolute values of errors for $u(\xi)=\cos(\pi \xi)$.
}
\label{figure8}
\end{figure}

\end{document}